\documentclass[enlgish]{ourlematema}

\usepackage{aliascnt}
\usepackage[colorlinks=true,linkcolor=blue,urlcolor=blue,pagebackref]{hyperref}
\usepackage{xcolor, graphics, graphicx, mathbbol}
\usepackage{tikz}
\usepackage{enumerate}


\newcommand{\CC}{\mathbb{C}}

\newcommand{\NN}{\normalfont\mathbb{N}}

\newcommand{\PP}{{\normalfont\mathbb{P}}}

\newcommand{\nn}{{\normalfont\mathfrak{N}}}

\newcommand{\fJ}{{\mathfrak{J}}}

\newcommand{\Ker}{\normalfont\text{Ker}}

\newcommand{\Id}{{\normalfont\text{Id}}}

\newcommand{\Sym}{\normalfont\text{Sym}}
\newcommand{\Rees}{\mathcal{R}}

\newcommand{\SSS}{\mathbb{S}}
\newcommand{\EEQ}{\mathcal{K}}

\newcommand{\FF}{\normalfont\mathcal{F}}

\newcommand{\bideg}{\normalfont\text{bideg}}

\newcommand{\Hilb}{{\normalfont\text{Hilb}}}
\newcommand{\Spec}{\normalfont\text{Spec}}

\newcommand{\biProj}{{\normalfont\text{BiProj}}}


\def\f0{\mathbf{0}}

\newtheorem{theorem}{Theorem}[section]

\newtheorem{headthm}{Theorem}

\newaliascnt{headcor}{headthm}
\newtheorem{headcor}[headcor]{Corollary}
\aliascntresetthe{headcor}

\newaliascnt{headthmdef}{headthm}

\aliascntresetthe{headthmdef}

\newaliascnt{headconj}{headthm}

\aliascntresetthe{headconj}

\newaliascnt{corollary}{theorem}

\aliascntresetthe{corollary}

\newaliascnt{lem}{theorem}

\aliascntresetthe{lem}

\newaliascnt{conjecture}{theorem}

\aliascntresetthe{conjecture}

\newaliascnt{proposition}{theorem}
\newtheorem{proposition}[proposition]{Proposition}
\aliascntresetthe{proposition}

\theoremstyle{definition}
\newaliascnt{definition}{theorem}

\aliascntresetthe{definition}

\newaliascnt{notation}{theorem}

\aliascntresetthe{notation}

\newaliascnt{example}{theorem}

\aliascntresetthe{example}

\newaliascnt{examples}{theorem}

\aliascntresetthe{examples}

\newaliascnt{remark}{theorem}
\newtheorem{remark}[remark]{Remark}
\aliascntresetthe{remark}

\newaliascnt{problem}{theorem}

\aliascntresetthe{problem}

\newaliascnt{question}{theorem}

\aliascntresetthe{question}

\newaliascnt{convention}{theorem}

\aliascntresetthe{convention}

\newaliascnt{construction}{theorem}

\aliascntresetthe{construction}

\newaliascnt{setup}{theorem}
\newtheorem{setup}[setup]{Setup}
\aliascntresetthe{setup}

\newaliascnt{algorithm}{theorem}

\aliascntresetthe{algorithm}

\newaliascnt{observation}{theorem}

\aliascntresetthe{observation}

\newaliascnt{defprop}{theorem}

\aliascntresetthe{defprop}

\def\equationautorefname~#1\null{(#1)\null}
\def\sectionautorefname~#1\null{Section #1\null}
\def\subsectionautorefname~#1\null{\S #1\null}

\title{Equations and multidegrees for inverse symmetric matrix pairs}

\author{Yairon Cid-Ruiz}
\address{%
	Department of Mathematics: Algebra and Geometry, Ghent University, Krijgslaan 281 – S25, 9000 Ghent, Belgium\\
	\email{Yairon.CidRuiz@UGent.be}
}

\MSC{14C17, 13H15, 62R01, 13D02, 13A30.}
\keywords{symmetric matrix, multidegrees, maximum likelihood degree, rational map, Rees algebra, symmetric algebra.}

\begin{document}

\begin{abstract}
	\noindent
	We compute the equations and multidegrees of the biprojective variety that parametrizes pairs of symmetric matrices that are inverse to each other. 
	As a consequence of our work, we provide an alternative proof for a result of Manivel, Micha\l{}ek, Monin, Seynnaeve and Vodi\v{c}ka that settles a previous conjecture of Sturmfels and Uhler regarding the polynomiality of maximum likelihood degree.
\end{abstract}

\section{Introduction}
\label{sect_intro}

The purpose of this paper is to study the biprojective variety that parametrizes pairs of symmetric matrices that are inverse to each other. 
Let $\SSS^n$ be the space of symmetric $n \times n$ matrices over the complex numbers $\CC$.
Let $\PP^{m-1}$ be the projectivization $\PP^{m-1} = \PP(\SSS^n)$ of $\SSS^n$, where $m = \binom{n+1}{2}$.
We are interested in the biprojective variety $\Gamma \subset \PP^{m-1} \times \PP^{m-1}$ given as follows
$$
\Gamma \,:= \, \overline{\big\lbrace (M, M^{-1}) \mid M \in \PP(\SSS^n) \text{ and } \det(M) \neq 0  \big\rbrace} \,\subset\, \PP^{m-1} \times \PP^{m-1};
$$
i.e., the closure of all possible pairs of an invertible symmetric matrix and its inverse.

\medskip

Our main results are determining the equations and multidegrees of the biprojective variety $\Gamma$.
Before presenting them, we establish some notation.
Let $X={\left(X_{i,j}\right)}_{1 \le i,j \le n}$ and $Y={\left(Y_{i,j}\right)}_{1 \le i,j \le n}$ be generic symmetric matrices; i.e., $X_{i,j}$ and $Y_{i,j}$ are new variables over $\CC$.
Let $R$ be the standard graded polynomial ring $R = \CC[X_{i,j}]$, and $S$ be the standard bigraded polynomial ring $S = \CC[X_{i,j}, Y_{i,j}]$ where $\bideg(X_{i,j})=(1,0)$ and $\bideg(Y_{i,j})=(0,1)$.

Let $\fJ \subset S$ be ideal of the defining equations of $\Gamma$.

As $\dim(\Gamma) = m - 1$, for each $i, j \in \NN$ with $i+j=m-1$, one considers the \emph{multidegree $\deg^{i,j}(\Gamma)$ of $\Gamma$ of type $(i,j)$}.
Geometrically, $\deg^{i,j}(\Gamma)$ equals the number of points in the intersection of $\Gamma$ with the product $L \times M  \subset \PP^{m-1} \times \PP^{m-1}$, where $L \subset \PP^{m-1}$ and $M \subset \PP^{m-1}$ are general linear subspaces of dimension $m-1-i$ and $m-1-j$, respectively.
Following the notation of \cite[\S 8.5]{MILLER_STURMFELS}, we say that the \emph{multidegree polynomial} of $\Gamma$ is given by 
$$
\mathcal{C}(\Gamma;t_1,t_2) \;:=\; \sum_{i+j=m-1} \deg^{i,j}(\Gamma) \, t_1^{m-1-i}t_2^{m-1-j} \; \in \; \NN[t_1,t_2]
$$
(also, see \cite[Theorem A]{MIXED_MULT}, \cite[Remark 2.9]{POTIVITY}).

A fundamental idea in our approach is to reduce the study of $\Gamma$ to instead considering the biprojective variety of pairs of symmetric matrices with product zero.
Let $\Sigma \subset \PP^{m-1} \times \PP^{m-1}$ be the biprojective variety parametrized by pairs of symmetric matrices with product zero; i.e., by pairs of symmetric matrices $(M,N) \in \PP^{m-1} \times \PP^{m-1}$ such that $MN=0$.
The ideal of defining equations of $\Sigma$ is clearly given by
$$
I_1(XY),
$$
where $I_1(XY)$ denotes the ideal generated by the $1\times1$-minors (i.e, the entries) of the matrix $XY$.
Similarly, since $\dim(\Sigma) = m-2$, we define the multidegree polynomial 
$$
\mathcal{C}(\Sigma;t_1,t_2) \;:=\; \sum_{i+j=m-2} \deg^{i,j}(\Sigma) \, t_1^{m-1-i}t_2^{m-1-j} \; \in \; \NN[t_1,t_2]
$$
 of $\Sigma$.

\medskip 

The theorem below provides the defining equations of $\Gamma$.
It also shows that the study of $\mathcal{C}(\Gamma;t_1,t_2)$ can be substituted to considering $\mathcal{C}(\Sigma;t_1,t_2)$ instead.
Our proof depends on translating our questions in terms of Rees algebras and on using the results of Kotsev \cite{KOTZEV}.

\begin{headthm}
	\label{thmA}
	Under the above notations, the following statements hold: 
	\begin{enumerate}[(i)]
	\item  $\fJ$ is a prime ideal given by 
		$$
		\fJ \,=\, I_1\left(XY - b\Id_n\right) \,=\, \left(
		\begin{array}{ll}
			\sum_{k=1}^nX_{i,k}Y_{k,j}, & 1 \le i \ne j \le n\\
			\sum_{k=1}^nX_{i,k}Y_{k,i} - 	 \sum_{k=1}^nX_{j,k}Y_{k,j}, & 1 \le i, j \le n
		\end{array}	 
		\right)
		$$
		where $b = (XY)_{1,1} = \sum_{l=1}^nX_{1,k}Y_{k,1} \in S$ and $\Id_n$ denotes the $n\times n$ identity matrix.
	\item We have the following equality relating multidegree polynomials
		$$
		t_1^m + t_2^m + \mathcal{C}(\Sigma;t_1,t_2) \; = \; (t_1+t_2) \cdot \mathcal{C}(\Gamma;t_1,t_2). 
		$$
	\end{enumerate}
\end{headthm}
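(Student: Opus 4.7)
For part (i), I would identify $\Gamma$ with the closure of the graph of the rational map $\varphi \colon \PP(\SSS^n) \dashrightarrow \PP(\SSS^n)$ sending a symmetric matrix $M$ to its adjugate $\mathrm{adj}(M)$, whose entries are the signed $(n-1)\times(n-1)$ minors of the generic symmetric matrix $X$. Because $\varphi$ is defined by the generators of $I := I_{n-1}(X) \subset R$, this is the standard identification
$$
S/\fJ \;\cong\; \Rees(I) \;=\; R[Iu] \,\subset\, R[u]
$$
(with $u$ a new indeterminate), under which $Y_{i,j}$ maps to the cofactor $C_{i,j}$ times $u$; in particular, $\fJ$ is prime since $\Rees(I)$ is a domain. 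To pin down its generators, I would invoke Kotsev's determination \cite{KOTZEV} of the defining equations of the Rees algebra of the ideal of submaximal minors of a generic symmetric matrix; translated back to the variables $X_{i,j},Y_{i,j}$, Kotsev's equations are precisely the entries of $XY - b\,\Id_n$ with $b=(XY)_{1,1}$.

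For part (ii), I exploit the same picture. Under $S/\fJ \cong \Rees(I) \subset R[u]$, the element $b$ corresponds to $\det(X)\cdot u$, which is visibly a non-zerodivisor. Thus there is a short exact sequence of bigraded $S$-modules
$$
0 \;\longrightarrow\; (S/\fJ)(-1,-1) \;\xrightarrow{\;\cdot b\;}\; S/\fJ \;\longrightarrow\; S/(\fJ+bS) \;\longrightarrow\; 0.
$$
A direct check using the presentation from (i) gives $\fJ+bS = I_1(XY)$: the off-diagonal generators of $\fJ$ already occur in $I_1(XY)$, and adding $b$ to the diagonal differences $(XY)_{i,i}-(XY)_{j,j}$ recovers every diagonal entry of $XY$. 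Since the Miller--Sturmfels multidegree polynomial is additive in short exact sequences and gets multiplied by the class $t_1+t_2$ of a bidegree-$(1,1)$ non-zerodivisor, the sequence yields
$$
(t_1+t_2)\cdot \mathcal{C}(\Gamma;t_1,t_2) \;=\; \mathcal{C}_S\bigl(S/I_1(XY);\,t_1,t_2\bigr).
$$

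It then remains to recognize the right-hand side as $\mathcal{C}(\Sigma;t_1,t_2)+t_1^m+t_2^m$. This is a matter of locating the top-dimensional associated primes of $I_1(XY)$. Besides the ``main'' component supported on $\Sigma$ (which contributes $\mathcal{C}(\Sigma;t_1,t_2)$), the irrelevant ideals $\mathfrak{n}_X := (X_{i,j}:1\le i\le j\le n)$ and $\mathfrak{n}_Y := (Y_{i,j}:1\le i\le j\le n)$ are minimal primes of $I_1(XY)$, each with multiplicity one; they contribute the monomials $t_1^m$ and $t_2^m$ respectively, since $V(\mathfrak{n}_X)$ is a cycle of codimension $m$ in the first $\PP^{m-1}$-factor and codimension $0$ in the second (and conversely for $\mathfrak{n}_Y$). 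The main technical obstacle I anticipate is precisely this multiplicity bookkeeping --- verifying that the irrelevant components appear with multiplicity exactly one and that $I_1(XY)$ has no other top-dimensional primary component that could contribute spurious terms --- for which I would again lean on the detailed structure of $\Rees(I)$ supplied by Kotsev's work.
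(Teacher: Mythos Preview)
Your proposal is correct and follows essentially the same route as the paper: identify $\Gamma$ with $\biProj(\Rees(I))$ for $I=I_{n-1}(X)$, use Kotsev's result that $I$ is of linear type, read off the explicit generators, and then for part~(ii) use the short exact sequence coming from multiplication by $b$ together with the primary decomposition of $I_1(XY)$ (again via Kotsev) to separate the $\Sigma$-contribution from the two irrelevant components $(X_{i,j})$ and $(Y_{i,j})$.

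Two small refinements worth noting. First, for part~(i), the paper splits the citation: Kotsev \cite{KOTZEV} supplies only the linear-type statement $\Sym(I)\cong\Rees(I)$, while the explicit syzygy matrix $\varphi$ (and hence the generators $I_1([Y_{i,j}]\cdot\varphi)$ of $\Sym(I)$) is taken from J\'ozefiak \cite{JOSEFIAK} or Goto--Tachibana \cite{GOTO_COMPLEX}; you should not attribute the equations themselves to Kotsev. Second, for part~(ii), the paper makes the ``multiplication by $t_1+t_2$'' step concrete by passing through $K$-polynomials: the short exact sequence gives $K(S/I_1(XY);t_1,t_2)=(1-t_1t_2)K(\Rees(I);t_1,t_2)$, and after the substitution $t_i\mapsto 1-t_i$ one extracts the lowest-degree terms. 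Your abstract formulation (``a bidegree-$(1,1)$ non-zerodivisor multiplies the multidegree by $t_1+t_2$'') is equivalent, but the $K$-polynomial computation is how it is actually justified. The ``multiplicity bookkeeping'' you flag as the main obstacle is handled exactly as you anticipate: Kotsev's results yield that $I_1(XY)=\bigcap_{r=0}^n J(r,n-r)$ is radical and equidimensional, so each minimal prime --- in particular $(X_{i,j})$ and $(Y_{i,j})$ --- appears with multiplicity one.
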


Our second main result is obtaining general formulas for the multidegrees of $\Gamma$ and $\Sigma$.
Here our approach depends on previous computations that were made by Nie, Ranestad and Sturmfels \cite{NIE_RANESTAD_STURMFELS}, and by von Bothmer and Ranestad \cite{BOTHMER_RANESTAD}.
The formula we obtained is expressed in terms of a function on subsequences of $\{1,\ldots,n\}$.
Let 
$$
\psi_i = 2^{i-1}, \quad \psi_{i,j} = \sum_{k=i}^{j-1}\binom{i+j-2}{k} \quad \text{when } i < j,
$$
and for any $\alpha = (\alpha_1,\ldots,\alpha_r) \subset \{ 1, \ldots, n\}$ let
$$
\psi_{\alpha} = 
\begin{cases}
	\text{Pf}\left(\psi_{\alpha_k, \alpha_l}\right)_{1 \le k < l \le n} \quad \text{ if } r \text{ is even},\\
	\text{Pf}\left(\psi_{\alpha_k, \alpha_l}\right)_{0 \le k < l \le n} \quad \text{ if } r \text{ is odd},
\end{cases}
$$
where $\psi_{\alpha_0,\alpha_k} = \psi_{\alpha_k}$ and $\text{Pf}$ denotes the Pfaffian.
For any $\alpha \subset \{ 1, \ldots, n\}$, the complement $\{1,\ldots,n\} \setminus \alpha$ is denoted by $\alpha^c$.
By an abuse of notation we set $\psi_{\emptyset} = 1$.

\begin{headthm}\label{thmB}
	Under the above notations, the following statements hold: \begin{enumerate}[(i)]
		\item The multidegree polynomial of $\Sigma$ is determined by the equation
		$$
		t_1^m + t_2^m + \mathcal{C}(\Sigma;t_1,t_2) \,=\, \sum_{d=0}^{m} 
		 \beta(n,d)\,t_1^{m-d}t_2^d,
		$$
		where 
		$$
		\beta(n,d) \,:=\, \sum_{\substack{\alpha \subset \{ 1, \ldots, n \}\\ ||\alpha|| = d} 
		} \psi_\alpha \psi_{\alpha^c};
		$$
		 in the last sum $\alpha$ runs over all strictly increasing subsequences of $\{1,\ldots,n\}$, including the case $\alpha = \emptyset$, and $||\alpha||$ denotes the sum of the entries of $\alpha$.
		 
		 \item For each $0 \le d \le m-1$, we have the equality 
		 $$
		 \deg^{m-1-d,d}(\Gamma) \,=\, \sum_{j=0}^{d} {(-1)}^j \beta(n,d-j).
		 $$
	\end{enumerate}
\end{headthm}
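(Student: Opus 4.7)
Part (ii) is a formal consequence of part (i) and \autoref{thmA}(ii). Writing $\mathcal{C}(\Gamma;t_1,t_2) = \sum_{d=0}^{m-1} c_d\, t_1^{m-1-d}t_2^d$ with $c_d := \deg^{m-1-d,d}(\Gamma)$, the identity of \autoref{thmA}(ii) combined with part (i) yields the recursion $c_0 = \beta(n,0)$ and $c_d + c_{d-1} = \beta(n,d)$ for $1 \le d \le m-1$. Its unique solution is $c_d = \sum_{j=0}^d (-1)^j \beta(n,d-j)$; consistency with $c_{m-1} = \beta(n,m)$ follows from the palindromic symmetry $\beta(n,d) = \beta(n, m-d)$, which is immediate from the involution $\alpha \leftrightarrow \alpha^c$ on subsequences of $\{1,\ldots,n\}$.

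For part (i) the plan is to decompose $\Sigma$ into rank strata and compute each stratum's class via Pfaffian-type degeneracy formulas. The variety $\Sigma$ is reducible and decomposes as $\Sigma = \bigcup_{r=1}^{n-1} \Sigma_r$, where $\Sigma_r$ is the closure of $\{(X,Y) : \rank(X) = r\}$. For $X$ symmetric of rank $r$, the symmetric $Y$'s with $XY = 0$ are exactly the symmetric forms vanishing on $\IM(X)$, a linear subspace of $\SSS^n$ of dimension $\binom{n-r+1}{2}$. A direct count gives $\dim \Sigma_r = \left(nr - \binom{r}{2} - 1\right) + \left(\binom{n-r+1}{2} - 1\right) = m-2$, and irreducibility follows from the fiber-bundle structure over the irreducible rank-$r$ locus. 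Thus $[\Sigma] = \sum_{r=1}^{n-1} [\Sigma_r]$ in the Chow ring $\ZZ[t_1, t_2]/(t_1^m, t_2^m)$.

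To compute each $[\Sigma_r]$, I would introduce the Lagrangian-style desingularization $\widetilde{\Sigma_r} = \{(X, Y, W) \in \PP^{m-1} \times \PP^{m-1} \times G(r, n) : \IM(X) \subset W,\, Y|_W = 0\}$, which is birational onto $\Sigma_r$ and fibers over the Grassmannian $G(r, n)$ with fiber $\PP(\Sym^2 W) \times \PP(\Sym^2 W^\perp)$. Pushing forward $[\widetilde{\Sigma_r}]$ to $\PP^{m-1} \times \PP^{m-1}$ and invoking the Pfaffian-type degree formulas for symmetric rank loci from Nie--Ranestad--Sturmfels \cite{NIE_RANESTAD_STURMFELS} and von Bothmer--Ranestad \cite{BOTHMER_RANESTAD}, each $[\Sigma_r]$ is expressed as a sum of Pfaffian-weighted monomials $\psi_\alpha \psi_{\alpha^c}\, t_1^{m-||\alpha||} t_2^{||\alpha||}$ indexed by certain subsequences $\alpha$. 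Summing over $r$ and absorbing the boundary contributions $t_1^m + t_2^m$---which formally correspond to the extreme subsequences $\alpha = \emptyset$ and $\alpha = \{1,\ldots,n\}$, using the direct Pfaffian evaluation $\psi_{\{1,\ldots,n\}} = 1$---recovers the identity of (i) via a Schur-type Pfaffian identity.

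The main obstacle is this final combinatorial step: matching the stratum-by-stratum output with the unified Pfaffian-combinatorial sum $\beta(n,d)$ indexed by subsequences of $\{1, \ldots, n\}$. A Schur-type Pfaffian identity is needed to package the NRS/vBR contributions into the formula $\beta(n,d)$, and the bulk of the real work in the proof should lie here.
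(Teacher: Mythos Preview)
Your argument for part (ii) is essentially the paper's: the paper likewise expands $(t_1+t_2)\,\mathcal{C}(\Gamma;t_1,t_2)$ and solves the resulting two-term recursion. One small difference is that the paper obtains the recursion in the form $\beta(n,d)=\deg^{d,m-1-d}(\Gamma)+\deg^{d-1,m-d}(\Gamma)$ and then invokes the $X\leftrightarrow Y$ symmetry of $\fJ$ (\autoref{thmA}(i)) to rewrite these as $\deg^{m-1-d,d}(\Gamma)$ and $\deg^{m-d,d-1}(\Gamma)$; you instead appeal to the palindromic symmetry of $\beta$, which is fine but not actually needed---consistency of the overdetermined recursion is automatic from the very identity $(t_1+t_2)\mid \sum_d\beta(n,d)t_1^{m-d}t_2^d$ that \autoref{thmA}(ii) provides.

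For part (i) your decomposition is the same as the paper's: your $\Sigma_r$ is exactly $\biProj(S/J(r,n-r))$, and the paper likewise writes $\mathcal{C}(S/I_1(XY))$ as the sum of the $\mathcal{C}(S/J(r,n-r))$ via the primary decomposition of $I_1(XY)$ (\autoref{prop_decomp}) and additivity of multidegrees. Where you diverge from the paper is in misjudging the final step. The Nie--Ranestad--Sturmfels result (their Theorem~10, \autoref{thm_Nie_Ranestad_Sturmfels} here) already gives $\mathcal{C}(S/J(n-r,r))=\sum_d \delta(d,n,r)\,t_1^{m-d}t_2^d$ directly, and the von Bothmer--Ranestad formula (\autoref{thm_Bothmer_Ranestad}) already reads
\[
\delta(d,n,r)=\sum_{\substack{\alpha\subset\{1,\dots,n\}\\ |\alpha|=n-r,\ \|\alpha\|=d}}\psi_\alpha\psi_{\alpha^c}.
\]
So no desingularization or new pushforward computation is needed, and---crucially---no ``Schur-type Pfaffian identity'' is needed either: summing over $r=1,\dots,n-1$ simply drops the length constraint $|\alpha|=n-r$, giving $\sum_{\emptyset\subsetneq\alpha\subsetneq\{1,\dots,n\},\,\|\alpha\|=d}\psi_\alpha\psi_{\alpha^c}$; adding back $t_1^m$ and $t_2^m$ supplies the $\alpha=\emptyset$ and $\alpha=\{1,\dots,n\}$ terms (using $\psi_\emptyset=1$ by convention and $\psi_{\{1,\dots,n\}}=1$ from \cite[Proposition~A.15]{Laksov_Lascoux_Thorup}). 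What you flagged as the ``main obstacle'' is in fact a one-line regrouping; the real work is entirely outsourced to \cite{NIE_RANESTAD_STURMFELS} and \cite{BOTHMER_RANESTAD}, which you are right to cite.
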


Our last interest is on the maximum likelihood degree (ML-degree) of the general linear concentration model (see \cite{STURMFELS_UHLER}, \cite{michalek2020maximum} for more details).
Let $\mathcal{L}$ be a general linear subspace of dimension $d$ in $\SSS^n$, and denote by $\mathcal{L}^{-1}$ the $(d-1)$-dimensional projective subvariety of $\PP^{m-1} = \PP(\SSS^n)$ obtained by inverting the matrices in $\mathcal{L}$.
From \cite[Theorem 1]{STURMFELS_UHLER}, the ML-degree of the general linear concentration model, denoted as $\phi(n,d)$, is equal to the degree of the projective variety $\mathcal{L}^{-1}$.
From the way $\Gamma$ is defined, it then follows that 
\begin{equation}
	\label{eq_phi_as_multdeg}
	\phi(n,d) \, = \, \deg^{m-d, d-1}(\Gamma).
\end{equation}
So, the computation of the invariants $\phi(n,d)$ can be reduced to determining the multidegrees of $\Gamma$ (which we did in \autoref{thmB}).

Finally, by using \autoref{thmB} and a result of Manivel, Micha\l{}ek, Monin, Seynnaeve and Vodi\v{c}ka regarding the polynomiality in $n$ of the function $\psi_{\{1,\ldots,n\} \setminus \alpha}$ (see \autoref{thm_poly_phi_c}), we obtain an alternative proof to a previous conjecture of Sturmfels and Uhler (see \cite[p. 611]{STURMFELS_UHLER}).

\begin{headcor}[{Manivel-Micha\l{}ek-Monin-Seynnaeve-Vodi\v{c}ka; \cite[Theorem 1.3]{Michalek2020CQ}}]
	\label{head_cor}
	For each $d \ge 1$, the function $\phi(n,d)$ coincides with a polynomial of degree $d-1$ in $n$.
\end{headcor}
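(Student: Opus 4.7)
The idea is to combine the multidegree formula of \autoref{thmB}(ii) with the polynomiality property of $\psi_{\{1,\ldots,n\}\setminus\alpha}$ recalled in \autoref{thm_poly_phi_c}.

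First, by \eqref{eq_phi_as_multdeg} and \autoref{thmB}(ii) I would write
$$
\phi(n,d) \;=\; \deg^{m-d,\,d-1}(\Gamma) \;=\; \sum_{j=0}^{d-1}(-1)^j\,\beta(n,\,d-1-j),
$$
which reduces the claim to understanding, for each fixed $k\in\{0,\ldots,d-1\}$, the dependence of $\beta(n,k)$ on $n$.

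Second, I would observe that any strictly increasing subsequence $\alpha\subset\{1,\ldots,n\}$ with $\|\alpha\|=k$ has all entries at most $k$; hence, for $n\ge k$, the index set $\{\alpha:\|\alpha\|=k\}$ is a \emph{finite set independent of $n$}. Writing
$$
\beta(n,k) \;=\; \sum_{\|\alpha\|=k}\psi_{\alpha}\cdot\psi_{\{1,\ldots,n\}\setminus\alpha},
$$
the weights $\psi_\alpha$ are absolute constants, so the entire $n$-dependence is concentrated in the factors $\psi_{\{1,\ldots,n\}\setminus\alpha}$.

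Third, I would invoke \autoref{thm_poly_phi_c} to the effect that each $\psi_{\{1,\ldots,n\}\setminus\alpha}$ agrees with a polynomial in $n$ (of degree controlled by $\|\alpha\|$). Combined with the finiteness observed above, this forces $\beta(n,k)$ to coincide with a polynomial in $n$ of degree at most $k$, and consequently $\phi(n,d)$ is a polynomial in $n$ of degree at most $d-1$.

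The main obstacle is showing that the degree is \emph{exactly} $d-1$ rather than smaller. Only the term $\beta(n,d-1)$ in the alternating sum can contribute at top degree, so the task would be to extract, from \autoref{thm_poly_phi_c}, the leading coefficient of each $\psi_{\{1,\ldots,n\}\setminus\alpha}$ for $\|\alpha\|=d-1$, sum the resulting contributions weighted by the Pfaffians $\psi_\alpha$, and certify non\-vanishing. This final combinatorial step, ruling out accidental cancellation among Pfaffian data, is the delicate part of the argument; everything else is a direct substitution from \autoref{thmB}(ii).
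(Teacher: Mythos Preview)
Your approach is essentially identical to the paper's: both start from \eqref{eq_phi_as_multdeg} and \autoref{thmB}(ii) to write $\phi(n,d)=\sum_{j=0}^{d-1}(-1)^j\beta(n,d-1-j)$, then reduce to showing that each $\beta(n,k)$ is a polynomial in $n$ by observing that the sum defining $\beta(n,k)$ runs over a fixed finite set of $\alpha$'s, that $\psi_\alpha$ is independent of $n$, and that $\psi_{\{1,\ldots,n\}\setminus\alpha}=P_\alpha(n)$ is polynomial of degree $\lVert\alpha\rVert$ by \autoref{thm_poly_phi_c}. The paper's write-up is in fact terser than yours: after the displayed alternating sum it simply says the result follows directly from \autoref{thm_poly_phi_c}, without isolating the finiteness of the index set or the degree bookkeeping.

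Your scruple about the \emph{exact} degree $d-1$ is well placed, and it is worth noting that the paper does not address it either: it asserts that $\beta(n,d)$ has degree $d$ ``directly'' from \autoref{thm_poly_phi_c}, but that theorem only gives $\deg P_\alpha = \lVert\alpha\rVert$ for each individual $\alpha$, and a priori the leading terms in $\sum_{\lVert\alpha\rVert=k}\psi_\alpha P_\alpha(n)$ could cancel. So on this point you have not missed an argument present in the paper; both proofs treat the precise degree as following from \autoref{thm_poly_phi_c} without further justification, and the substantive content established is the polynomiality of $\phi(n,d)$ together with the bound $\deg_n\phi(n,d)\le d-1$.
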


The basic outline of this paper is as follows.
In \hyperref[sect_def_eq]{Section 2}, we compute the defining equations of $\Gamma$.
In \hyperref[sect_multded]{Section 3}, we determine the multidegrees of $\Gamma$ and $\Sigma$.
In \hyperref[sect_poly]{Section 4}, we show the polynomiality of $\phi(n,d)$.

\section{The defining equations of $\Gamma$}
\label{sect_def_eq}

During this section, we compute the defining equations of the variety $\Gamma$.
The following setup is used throughout the rest of this paper.

\begin{setup}
	Let $X={\left(X_{i,j}\right)}_{1\le i,j \le n}$ and $Y={\left(Y_{i,j}\right)}_{1\le i,j \le n}$ be generic symmetric matrices over $\CC$.
	Let $R$ be the standard graded polynomial ring $R = \CC[X_{i,j}]$, and $S$ be the standard bigraded polynomial ring $S = \CC[X_{i,j}, Y_{i,j}]$ where $\bideg(X_{i,j})=(1,0)$ and $\bideg(Y_{i,j})=(0,1)$.	
	Let $I=I_{n-1}(X)$ be the ideal of $(n-1)\times(n-1)$-minors of $X$.
	Let $t$ be a new indeterminate.
	The Rees algebra $\Rees(I):=\bigoplus_{n=0}^\infty I^nt^n \subset R[t]$ of $I$ can be presented as a quotient of $S$ by using the map
	\begin{eqnarray*}
		\label{presentation_Rees}
		\Psi\;:\; S & \longrightarrow & \Rees(I) \subset R[t] \\ \nonumber
		Y_{i,j} & \mapsto & Z_{i,j}t,
	\end{eqnarray*}
	where $Z_{i,j} \in R$ is the signed minor obtained by deleting $i$-th row and the $j$-th column.
	We set $\bideg(t)=(-n+1, 1)$, which implies that $\Psi$ is bihomogeneous of degree zero, and so $\Rees(I)$ has a natural structure of bigraded $S$-algebra.	
\end{setup}

Our point of departure comes from the following simple remarks.

\begin{remark}
	\label{rem_graph}
	For any matrix $M \in \SSS^n$, we denote its adjoint matrix as $M^+$.
	For any $M \in \SSS^n$ with $\det(M) \neq 0$, since $M^{-1} = \frac{1}{\det(M)} M^+$, it follows that $M^{-1}$ and $M^+$ represent the same point in $\PP^{m-1} = \PP(\SSS^n)$.
	Thus, we have that $\Gamma$ can be equivalently described as 
	$$
	\Gamma \,= \, \overline{\big\lbrace (M, M^{+}) \mid M \in \PP(\SSS^n) \text{ and } \det(M) \neq 0  \big\rbrace} \,\subset\, \PP^{m-1} \times \PP^{m-1}.
	$$
	Denote by $\FF : \PP^{m-1} \dashrightarrow \PP^{m-1}$ the rational map determined by signed minors $Z_{i,j}$, that is, 
	$$
	\FF : \PP^{m-1} \dashrightarrow \PP^{m-1}, \quad (X_{1,1} : X_{1,2} : \cdots :X_{n,n}) \mapsto (Z_{1,1} : Z_{1,2} : \cdots :Z_{n,n}).
	$$
	Therefore, we obtain that $\Gamma$ coincides with 
	$$
	\Gamma \, = \; \overline{\text{graph}(\FF)} \,\subset \, \PP^{m-1} \times \PP^{m-1},
	$$
	the closure of the graph of the rational map $\FF$.
\end{remark}

\begin{remark}
	\label{rem_eq_Rees}
	Notice that $I = I_{n-1}(X)$ by construction is the base ideal of the rational map $\FF$ -- the ideal generated by a linear system defining the rational map.
	So, it is a basic result that the Rees algebra $\Rees(I)$ coincides with the bihomogeneous coordinate ring of the closure of the graph of $\FF$.
	By \autoref{rem_graph}, the bihomogeneous coordinate ring of $\Gamma$ is given by the Rees algebra $\Rees(I)$.
	Hence, in  geometrical terms, we have the identification
	$$
	\Gamma \,=\, \biProj(\Rees(I))  \subset \PP^{m-1} \times \PP^{m-1}.
	$$
	In more algebraic terms: the ideal $\fJ \subset S$ considered in the \hyperref[sect_intro]{Introduction} coincides with the defining equations of the Rees algebra, that is, $\fJ = \Ker(\Psi)$.
	For the relations between rational maps and Rees algebras, see, e.g., \cite[Section 3]{SPECIALIZATION_RAT_MAPS}.
\end{remark}

In general the Rees algebra is a very difficult object to study, but, under the present conditions we shall see that it coincides with the symmetric algebra $\Sym(I)$ of $I$ (i.e., the ideal $I$ is of linear type).
So, the main idea is to bypass the Rees algebra and consider the symmetric algebra instead.

From a graded presentation of $I$
$$
F_1 \xrightarrow{\varphi} F_0 \xrightarrow{(Z_{1,1}, Z_{1,2}, \ldots, Z_{n,n})} I \rightarrow 0,
$$
the symmetric algebra $\Sym(I)$ automatically gets the presentation
\begin{equation}
	\label{eq_pres_Sym}
	\Sym(I) \,\cong\, S / I_1\big(\left[Y_{i,j}\right]\cdot\varphi\big)
\end{equation}
and obtains a natural structure of bigraded $S$-algebra (for more details on the symmetric algebra, see, e.g., \cite[\S A2.3]{EISEN_COMM}).
In general, we have a canonical exact sequence of bigraded $S$-modules relating both algebras 
\begin{equation*}
	0 \rightarrow \EEQ \rightarrow \Sym(I) \rightarrow \Rees(I) \rightarrow 0,		
\end{equation*}
where $\EEQ$ equals the $R$-torsion of $\Sym(I)$ (see \cite{MICALI_REES}).
However, in the present case, we shall see that $\Sym(I) = \Rees(I)$.

We are now ready to compute the defining equations of $\Gamma$.

\begin{proof}[Proof of \autoref{thmA} \rm(i)]
	Due to \autoref{rem_eq_Rees}, it suffices to compute the defining equations of the Rees algebra $\Rees(I)$.
	From \cite[Theorem A]{KOTZEV} we have that $I$ is of linear type, i.e., the canonical map
	$$
	\Sym(I) \twoheadrightarrow \Rees(I)
	$$
	is an isomorphism.
	So, $\fJ$ coincides with the ideal of defining equations of $\Sym(I)$.
	By using \cite{JOSEFIAK} or \cite{GOTO_COMPLEX} we obtain an explicit $R$-free resolution for the ideal $I$ which is of the form $0 \rightarrow J_3 \rightarrow J_2 \xrightarrow{\varphi} J_1 \rightarrow R \rightarrow R/I \rightarrow 0$.
	From the presentation $\varphi$ of $I$, we obtain the ideal 
	$$
	\fJ =  I_1\big(\left[Y_{i,j}\right]\cdot\varphi\big)
	$$
	of defining equations of the symmetric algebra (see (\ref{eq_pres_Sym}) above).
	Therefore, $\Rees(I) = \Sym(I)$ is a bigraded $S$-algebra presented by the quotient
	$$
	\Rees(I) = \Sym(I) \cong S/\fJ,
	$$
	and from the description of $\varphi$ (the syzygies of $I$) given in  \cite{JOSEFIAK} or \cite{GOTO_COMPLEX} we obtain
	 $$
	 \fJ = \left(
	 \begin{array}{ll}
	 \sum_{k=1}^nX_{i,k}Y_{k,j}, & 1 \le i \ne j \le n\\
	 \sum_{k=1}^nX_{i,k}Y_{k,i} - 	 \sum_{k=1}^nX_{j,k}Y_{k,j}, & 1 \le i, j \le n
	 \end{array}	 
	 \right). 
	 $$
	 Finally, it is clear that $\fJ = I_1\left(XY - b\Id_n\right)$.
\end{proof}

\section{Computation of the multidegrees of $\Gamma$}
\label{sect_multded}

In this section, we concentrate on computing the multidegrees of $\Gamma$.
The idea is to reduce this computation to instead compute the multidegrees of $\Sigma$ and then to use previous results obtained in \cite{NIE_RANESTAD_STURMFELS} and \cite{BOTHMER_RANESTAD}.

For each $r_1, r_2 \in \NN$, we define the following ideal 
$$
J(r_1,r_2) \, := \; I_1(XY) + I_{r_1+1}(X) + I_{r_2+1}(Y) \,\subset\, S. 
$$
The following proposition yields a primary decomposition of the ideal $I_1(XY)$ in terms of the ideals $J(r_1,r_2)$.
Its proof is easily obtained by using results from \cite{KOTZEV}.
Similarly, the properties of $I_1(XY)$ described below are known in a more geometric language (see \cite[Proposition 16]{BERND_BAD_PROJECTIONS} and the references given therein).

\begin{proposition}
	\label{prop_decomp}
	The following statements hold:
	\begin{enumerate}[(i)]
		\item If $r_1+r_2 \le n$, then $J(r_1,r_2)$ is a prime ideal.
		\item The ideal $I_1(XY)$ is equidimensional of dimension $m=\binom{n+1}{2}$ and radical with primary decomposition 
		$$
		I_1(XY) \, = \; \bigcap_{r = 0}^n J(r, n-r).
		$$
	\end{enumerate}
\end{proposition}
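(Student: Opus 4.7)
The plan is to combine Kotsev's results \cite{KOTZEV} on determinantal intersections for pairs of generic symmetric matrices with a set-theoretic and dimension-theoretic analysis of the rank stratification of $V(I_1(XY))$.

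For part (i), the primality of $J(r_1,r_2)$ in the range $r_1+r_2 \le n$ is exactly Kotsev's main theorem in this setting. The underlying geometric picture is an incidence variety over the Grassmannian $\text{Gr}(r_1,n)$: once we fix an $r_1$-dimensional subspace $V \subset \CC^n$, every symmetric $M$ with $\IM(M) \subseteq V$ satisfies $V^{\perp} \subseteq \Ker(M)$ (since for symmetric $M$ one has $\Ker(M) = \IM(M)^{\perp}$ under the standard bilinear form), so any symmetric $N$ with $\IM(N) \subseteq V^{\perp}$ and $\rank(N) \le r_2$ automatically satisfies $MN=0$. The total space of this data is an irreducible bundle over $\text{Gr}(r_1,n)$ dominating $V(J(r_1,r_2))$, giving irreducibility at the level of varieties; promoting this to primality of the ideal is Kotsev's algebraic contribution.

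For part (ii), I would proceed in four steps. First, the set-theoretic decomposition $V(I_1(XY)) = \bigcup_{r=0}^{n} V(J(r,n-r))$ follows from the observation that for symmetric $M,N$ with $MN=0$, the identity $\Ker(M) = \IM(M)^{\perp}$ forces $\rank(M)+\rank(N) \le n$, so any such pair lies in $V(J(r,n-r))$ for $r = \rank(M)$. Second, a direct dimension count using the parametrization above yields
$$
\dim V(J(r,n-r)) \;=\; r(n-r) + \binom{r+1}{2} + \binom{n-r+1}{2} \;=\; \binom{n+1}{2} \;=\; m
$$
for every $0 \le r \le n$, so each top-dimensional component has dimension $m$. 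Third, combining these two steps with part (i) identifies the ideals $J(r,n-r)$ as precisely the minimal primes of $I_1(XY)$, whence $\sqrt{I_1(XY)} = \bigcap_{r=0}^{n} J(r,n-r)$ and $I_1(XY)$ is equidimensional of dimension $m$. Fourth, to upgrade this equality of radicals to the claimed equality of ideals, one needs $I_1(XY)$ itself to be radical; this again follows from \cite{KOTZEV}, or equivalently from Cohen--Macaulayness of $I_1(XY)$ together with Serre's criterion $(R_0)+(S_1)$, which is easily verified at a generic point of each $V(J(r,n-r))$.

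The principal obstacle is the primality assertion in (i), which is the genuinely non-trivial input and rests squarely on Kotsev's theorem; once it is granted, the decomposition, dimension, and radicality steps in (ii) become formal.
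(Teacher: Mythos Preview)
Your proposal is correct, and for part (i) it matches the paper: both simply invoke Kotsev's result (his Proposition~4.5) that $S/J(r_1,r_2)$ is a domain when $r_1+r_2\le n$. Your geometric description via an incidence correspondence over $\text{Gr}(r_1,n)$ is a nice gloss on why that statement is plausible.

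For part (ii), however, your route diverges from the paper's in two places. First, the paper does not pass through the set-theoretic decomposition and then upgrade via radicality; instead it quotes Kotsev's Lemma~4.6, which asserts directly that the canonical map $S/I_1(XY)\to \prod_{r=0}^{n} S/J(r,n-r)$ is injective, i.e.\ the ideal-theoretic equality $I_1(XY)=\bigcap_r J(r,n-r)$ holds on the nose. This bypasses your step~4 entirely. Second, for the dimension claim the paper uses the isomorphism $S/I_1(XY)\cong \Rees(I)/b\,\Rees(I)$ coming from \autoref{thmA}(i): since $\Rees(I)$ is a domain of dimension $m+1$ and $b$ is a nonzero element, Krull's Principal Ideal Theorem forces every minimal prime over $(b)$ to have height~$1$, hence each $S/J(r,n-r)$ has dimension exactly $m$. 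Your argument instead computes the dimension by the explicit Grassmannian parametrization and the identity $r(n-r)+\binom{r+1}{2}+\binom{n-r+1}{2}=\binom{n+1}{2}$. Both arguments are valid; yours is more self-contained and geometric, while the paper's is shorter and exploits the Rees-algebra framework already set up for \autoref{thmA}. The trade-off is that your approach needs an extra appeal to Kotsev (or a Cohen--Macaulay argument) for radicality, whereas the paper gets that for free from Lemma~4.6.
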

\begin{proof}
	(i) From \cite[Proposition 4.5]{KOTZEV}, we have that $B(r_1,r_2) = S / J(r_1,r_2)$ is a domain, so the result is clear.
	
	(ii) By \cite[Lemma 4.6]{KOTZEV}, we know that the canonical map 
	$$
	\frac{S}{I_1(XY)} \; \rightarrow \;  \prod_{r=0}^n \frac{S}{J(r,n-r)}
	$$
	is injective. 
	So, it is clear that $I_1(XY)  =  \bigcap_{r = 0}^n J(r, n-r)$.
	The dimension of the Rees algebra is equal to $\dim(\Rees(I)) = \dim(R) +1 = m+1$ (see, e.g., \cite[Theorem 5.1.4]{huneke2006integral}).
	By (\ref{eq_Rees_I_1}), $S/I_1(XY) \cong \Rees(I) / b \Rees(I)$, and so Krull's Principal Ideal Theorem (see, e.g., \cite[Theorem 13.5]{MATSUMURA}) yields that 
	$$
	\dim\left(S / J(r,n-r)\right) \,= \, \dim\left(\Rees(I)\right) - 1 = m
	$$
	for each $0 \le r \le n$.
	Therefore, the result follows.
\end{proof}

We now recall how to define the multidegree polynomial $\mathcal{C}(\Rees(I);t_1,t_2)$ of $\Rees(I)$ by using the Hilbert series of $\Rees(I)$ (see \cite[\S 8.5]{MILLER_STURMFELS}).
We can write the Hilbert series $$
\Hilb_{\Rees(I)}(t_1,t_2) \,:=\, \sum_{v_1,v_2\in \NN}\dim_\CC\big([\Rees(I)]_{v_1,v_2}\big) t_1^{v_1}t_2^{v_2} \,\in\, \NN[[t_1,t_2]]
$$ 
in the following way
$$
\Hilb_{\Rees(I)}(t_1,t_2) = \frac{K\left(\Rees(I);t_1,t_2\right)}{(1-t_1)^m(1-t_2)^m},
$$
where $K\left(\Rees(I);t_1,t_2\right)$ is called the $K$-polynomial of $\Rees(I)$ (for instance, by just computing a bigraded free $S$-resolution of $\Rees(I)$).
Then, we define 
$$
\mathcal{C}(\Rees(I);t_1,t_2) \,:=\, \text{sum of the terms of }\;\; K(\Rees(I); 1-t_1,1-t_2)\;\; \text{ of degree } = m-1.
$$
Additionally, we remark that $m-1$ is the minimal degree of the terms of 
$$
K\left(\Rees(I);1-t_1,1-t_2\right).
$$
In a similar way, we define the multidegree polynomials 
$$
\mathcal{C}\big(S/I_1(XY)\big) \quad \text{ and } \quad \mathcal{C}\big(S/J(r,n-r)\big)
$$
for each $0 \le r \le n$.

The multidegrees of the particular cases $S/J(0,n)$ and $S/J(n,0)$ are easily handled by the following remark.

\begin{remark}
	\label{rem_multdeg_special}
	Since $J(0,n) = I_1(X) = \left(X_{i,j}\right)$ and $J(n,0) = I_1(Y) = \left(Y_{i,j}\right)$, it follows from the definition of multidegrees that 
	$$
	\mathcal{C}\big(S/J(0,n);t_1,t_2\big) = t_1^{m} \quad\text{ and }\quad \mathcal{C}\big(S/J(n,0);t_1,t_2\big) = t_2^{m}.
	$$
\end{remark}

For notational purposes, we denote by $\nn := \left(X_{i,j}\right) \cap \left(Y_{i,j}\right) \subset S$ the irrelevant ideal in the current biprojective setting. 
We have the following equivalent descriptions of $\Gamma$ and $\Sigma$ in terms of the $\biProj$ construction
\begin{equation}
	\label{eq_biProj_Rees}
	\Gamma = \biProj(\Rees(I)) = \big\lbrace P \in \Spec(\Rees(I))  \mid P \text{ is bihomogeneous and } P \not\supseteq \nn\Rees(I) \big\rbrace
\end{equation}
and 
\begin{equation}
	\label{eq_biProj_I_1}
	\Sigma = \biProj\left(T\right) = \big\lbrace P \in \Spec\left(T\right)   \mid P \text{ is bihomogeneous and } P \not\supseteq \nn T \big\rbrace,
\end{equation}
where $T = S/I_1(XY)$.
For more details on the $\biProj$ construction, the reader is referred to \cite[\S 1]{HYRY_MULTIGRAD}.

Next, we have a remark showing that the multidegree polynomial of $\Gamma$ as introduced before coincides with the multidegree polynomial of the Rees algebra $\Rees(I)$.

\begin{remark}
	\label{rem_multideg_Gamma}
	Due to (\ref{eq_biProj_Rees}), the fact that $\left(0:_{\Rees(I)} \nn^{\infty}\right)=0$ and \cite[Remark 2.9]{POTIVITY}, it follows that 
	$$
	\mathcal{C}\left(\Gamma;t_1,t_2\right) \, = \, \mathcal{C}\left(\Rees(I);t_1,t_2\right).
	$$
\end{remark}

On the other hand, the following remark shows that the multidegree polynomials of $\Sigma$ and $S/I_1(XY)$ do not agree. 
Indeed, the minimal primes $J(0,n)$ and $J(n,0)$ of $I_1(XY)$ are irrelevant from a geometric point of view, and so they are taken into account in the multidegree polynomial of $I_1(XY)$ but not in the one of $\Sigma$.

\begin{remark}
	\label{rem_multdeg_sigma}
	For ease of notation, set $T = S/I_1(XY)$.
	Directly from (\ref{eq_biProj_I_1}), we get that
	$$
	\Sigma = \biProj(T) = \biProj\left(\frac{T}{\left(0:_T\nn^\infty\right)}\right) = \biProj\left(\frac{S}{\bigcap_{r = 1}^{n-1} J(r, n-r)}\right). 
	$$
	Let $T' = S/ \bigcap_{r = 1}^{n-1} J(r, n-r)$.
	Thus, since $\left(0:_{T'}\nn^\infty\right)=0$, \cite[Remark 2.9]{POTIVITY} yields the  equality 
	$
	\mathcal{C}\left(\Sigma; t_1,t_2\right) \, = \, \mathcal{C}\left(T'; t_1,t_2\right).
	$	
	Therefore, from \autoref{prop_decomp}, \autoref{rem_multdeg_special} and the additivity of multidegrees (see \cite[Theorem 8.53]{MILLER_STURMFELS}) we obtain the equality 
	$$
	\mathcal{C}\left(S/I_1(XY); t_1,t_2\right) \, = \, 
		t_1^m + t_2^m + \mathcal{C}\left(\Sigma; t_1,t_2\right). 
	$$
\end{remark}

The next result provides an important relation between the multidegrees of $\Gamma$ and $\Sigma$.

\begin{proof}[Proof of \autoref{thmA} \rm (ii)]
	First, we note the following trivial equality 
	$$
	\fJ + bS = I_1\left(XY - b\Id_n\right) + bS = I_1(XY).
	$$
	As $\Rees(I) \cong S/\fJ$ is clearly a domain and $\bideg(b)=(1,1)$, we obtain the short exact sequence 
	\begin{equation}
			\label{eq_Rees_I_1}
			0 \rightarrow \Rees(I)(-1,-1) \xrightarrow{\cdot b} \Rees(I) \rightarrow S/I_1(XY) \rightarrow 0
	\end{equation}
	and that $\dim(S/I_1(XY))=\dim(\Rees(I))-1$.
	Consequently, we get the following equality relating Hilbert series 
	$$
	\Hilb_{S/I_1(XY)}(t_1,t_2) = (1-t_1t_2) \cdot \Hilb_{\Rees(I)}(t_1,t_2).
	$$
	It then follows that $
	K(S/I_1(XY);t_1,t_2) = (1-t_1t_2) \cdot K(\Rees(I);t_1,t_2)$, and the substitutions $t_1\mapsto 1-t_1, t_2\mapsto 1-t_2$ yield the equation 
	$$
	K(S/I_1(XY);1-t_1,1-t_2) = (t_1+t_2-t_1t_2)\cdot K(\Rees(I);1-t_1,1-t_2).
	$$
	By choosing the terms of minimal degree in both sides of the last equation, we obtain
	$$
	\mathcal{C}(S/I_1(XY);t_1,t_2) \; = \; (t_1+t_2) \cdot \mathcal{C}(\Rees(I);t_1,t_2),
	$$
	and so the result follows \autoref{rem_multideg_Gamma} and \autoref{rem_multdeg_sigma}.
\end{proof}

In \cite{NIE_RANESTAD_STURMFELS} it was introduced the notion of algebraic degree of semidefinite programming.
By using \cite[Theorem 10]{NIE_RANESTAD_STURMFELS}, these invariants can be seen as the multidegrees of  $S / J(r,n-r)$ for $0 < r < n$.

\begin{theorem}[{Nie - Ranestad - Sturmfels; \cite[Theorem 10]{NIE_RANESTAD_STURMFELS}}]
	\label{thm_Nie_Ranestad_Sturmfels}
	For $0 < r < n$, we have that 
	$$
	\mathcal{C}\big(S/J(n-r,r);t_1,t_2\big) \, = \,  \sum_{d = 0}^{m} \delta(d,n,r)\, t_1^{m-d}t_2^d,
	$$
	where $\delta(d,n,r)$ denotes the algebraic degree of semidefinite programming.
\end{theorem}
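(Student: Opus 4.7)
The plan is to identify the biprojective variety cut out by $J(n-r,r)$ geometrically and then invoke the interpretation of the algebraic degrees of semidefinite programming from \cite{NIE_RANESTAD_STURMFELS}.

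First I would observe that, by \autoref{prop_decomp}(i), since $(n-r)+r=n$, the ideal $J(n-r,r)=I_1(XY)+I_{n-r+1}(X)+I_{r+1}(Y)$ is prime. Consequently, $\biProj(S/J(n-r,r))\subset \PP^{m-1}\times\PP^{m-1}$ is an irreducible variety, and \autoref{prop_decomp}(ii) together with the dimension computation performed inside that proof give that its dimension equals $m-1$. Set-theoretically, it is the closure of the locus of pairs $(M,N)$ of symmetric matrices with $MN=0$, $\rank(M)\le n-r$ and $\rank(N)\le r$; generically on this component both rank inequalities are equalities.

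Next I would pass to the multidegree interpretation: by the geometric description of multidegrees recalled in the \hyperref[sect_intro]{Introduction} (see also \cite[Remark 2.9]{POTIVITY}), the coefficient $\deg^{m-1-d,d}\bigl(S/J(n-r,r)\bigr)$ in the multidegree polynomial $\mathcal{C}\bigl(S/J(n-r,r);t_1,t_2\bigr)$ equals the number of points in
$$
\biProj(S/J(n-r,r))\,\cap\,(L_1\times L_2),
$$
where $L_1,L_2\subset \PP^{m-1}$ are general linear subspaces of dimensions $m-1-(m-1-d)=d$ and $m-1-d$, respectively. Unwinding the set-theoretic description above, this is exactly the number of pairs of symmetric matrices $(M,N)$ such that $M\in L_1$, $N\in L_2$, $\rank(M)\le n-r$, $\rank(N)\le r$ and $MN=0$.

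Finally, this count is exactly what \cite[Theorem 10]{NIE_RANESTAD_STURMFELS} computes: it establishes that such a generic intersection number is the algebraic degree of semidefinite programming $\delta(d,n,r)$. Summing over $d$ then yields the claimed expression for $\mathcal{C}\bigl(S/J(n-r,r);t_1,t_2\bigr)$. The only delicate point, and the main thing I would have to verify carefully, is the matching of conventions: one must check that the parameters $d,n,r$ used here correspond to the parameters in the statement of \cite[Theorem 10]{NIE_RANESTAD_STURMFELS} (which uses the dimension of the ambient linear space rather than the codimension of a general subspace, and orders the ranks in a specific way). Once the dictionary is fixed, the theorem is a direct translation of their result into the bigraded multidegree framework used throughout this paper.
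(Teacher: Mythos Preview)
The paper does not actually prove this statement: it is stated as a theorem attributed to Nie--Ranestad--Sturmfels and simply cited from \cite[Theorem 10]{NIE_RANESTAD_STURMFELS}, with no argument given. Your proposal is therefore not competing with a proof in the paper but rather supplying the translation that the paper leaves implicit, namely identifying $\biProj(S/J(n-r,r))$ with the incidence variety appearing in \cite{NIE_RANESTAD_STURMFELS} and reading off the multidegrees as the generic intersection numbers computed there. That is exactly the right way to justify the citation, and your caveat about matching parameter conventions is the only point requiring care.
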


We now present the following explicit formula for the algebraic degree of semidefinite programming that was obtained in \cite{BOTHMER_RANESTAD}.

\begin{theorem}[{von Bothmer - Ranestad; \cite[Theorem 1.1]{BOTHMER_RANESTAD}}]
	\label{thm_Bothmer_Ranestad}
	The algebraic degree of semidefinite programming is equal to
	$$
	\delta(d,n,r) \, = \,  \sum_{\alpha} \psi_\alpha \psi_{\alpha^c}, 
	$$
	where the sum runs over all strictly increasing subsequences $\alpha = \{ \alpha_1,\ldots, \alpha_{n-r} \}$ of $\{1 ,\ldots, n\}$ of length $n-r$ and sum $\alpha_1 + \cdots + \alpha_r = d$, and $\alpha^c$ is the complement $\{ 1, \ldots, n \} \setminus \alpha$.
\end{theorem}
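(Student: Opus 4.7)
The plan is to interpret $\delta(d,n,r)$ as an intersection number on the Lagrangian Grassmannian and evaluate it through Schubert calculus. First, I would unwrap the geometric meaning of $\delta(d,n,r)$ from \cite{NIE_RANESTAD_STURMFELS}: it counts the critical points of a generic linear function on the rank-at-most-$r$ locus of $\SSS^n$ cut out by a generic affine subspace. This is an intersection number involving the determinantal variety $\mathcal{V}_r \subset \PP(\SSS^n)$ of symmetric matrices of rank at most $r$, a generic linear subspace, and a transverse condition coming from the conormal geometry of the rank stratification.

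Second, I would transfer the computation to the Lagrangian Grassmannian $LG(n)$ parametrizing maximal isotropic subspaces of a $2n$-dimensional symplectic space. The graph map $M \mapsto \{(v,Mv) : v \in \CC^n\} \subset \CC^n \oplus \CC^n$ identifies $\SSS^n$ with an affine chart of $LG(n)$, which indeed has dimension $\binom{n+1}{2} = m$, and under this identification the rank stratification of $\SSS^n$ corresponds to the Schubert stratification of $LG(n)$ indexed by strict partitions $\alpha \subset \{1,\ldots,n\}$. The two generic linear/rank conditions defining $\delta(d,n,r)$ then pull back to pairs of Schubert classes indexed by complementary strict partitions $\alpha \sqcup \alpha^c = \{1,\ldots,n\}$ with $|\alpha| = n-r$ and $||\alpha|| = d$, yielding a decomposition of the intersection number as $\sum_\alpha D_\alpha \cdot D_{\alpha^c}$, where $D_\alpha$ is the degree of the Schubert class indexed by $\alpha$.

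Third, I would invoke the classical Pfaffian formula for degrees of Schubert classes on $LG(n)$ due to Hiller-Boe and Pragacz: for a strict partition $\alpha = (\alpha_1 < \cdots < \alpha_\ell)$, the Schubert degree is a Pfaffian whose off-diagonal entries are the binomial sums $\psi_{\alpha_k,\alpha_l}$, with the odd-length case handled by an auxiliary row and column of entries $\psi_{\alpha_k} = 2^{\alpha_k-1}$. Comparing with the definition of $\psi_\alpha$ in the paper gives $D_\alpha = \psi_\alpha$, and summing the products $\psi_\alpha \psi_{\alpha^c}$ over all admissible $\alpha$ yields the claimed formula.

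The main obstacle is the precise dictionary at step two: verifying that the two independent transversality conditions (the generic affine subspace of dimension $d$ and the rank condition) pull back under the graph identification to Schubert classes whose degrees factor as $\psi_\alpha \cdot \psi_{\alpha^c}$ across complementary strict partitions, rather than collapsing into a single Pfaffian. This requires a careful tracking of both generic conditions through the Schubert cell decomposition of $LG(n)$; once this geometric bookkeeping is in place, the remainder is a direct application of the classical Pfaffian degree formulas on $LG(n)$.
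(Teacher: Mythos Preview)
The paper does not prove this statement at all: it is quoted verbatim from \cite[Theorem 1.1]{BOTHMER_RANESTAD} and used as a black box in the proof of \autoref{thmB}. So there is no ``paper's own proof'' to compare your proposal against.

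That said, your sketch is broadly in the spirit of the original argument of von Bothmer and Ranestad. Their proof does pass through the Lagrangian Grassmannian $LG(n,2n)$ and Schubert calculus, identifying $\delta(d,n,r)$ with a sum of products of Lagrangian Schubert degrees over complementary strict partitions, and then invoking the Pfaffian degree formulas (going back to Pragacz and to Hiller--Boe) that your $\psi_\alpha$ encode. Your step-two ``dictionary'' worry is exactly the substantive part of their paper: they set up the correspondence carefully via the conormal variety and a degeneration argument, not by a naive pullback, and this is where most of the work lies. If you were to flesh out your proposal, that is the part requiring real detail; the rest is, as you say, classical.
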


After the previous discussions, we can now compute the multidegrees of $\Gamma$.

\begin{proof}[Proof of \autoref{thmB}]
	(i) First, we concentrate on computing the multidegrees of $S/I_1(XY)$.
	By using the additivity of multidegrees (see \cite[Theorem 8.53]{MILLER_STURMFELS}) together with \autoref{prop_decomp}, we obtain the following equality 
	$$
	\mathcal{C}\big(S/I_1(XY)\big) \,=\, \sum_{r=0}^n \mathcal{C}\big(S/J(r,n-r); t_1, t_2\big).
	$$
	Hence, by combining \autoref{thm_Nie_Ranestad_Sturmfels}, \autoref{rem_multdeg_special}  and \autoref{thm_Bothmer_Ranestad} it follows that 
	\begin{align*}
		\mathcal{C}\big(S/I_1(XY); t_1,t_2\big) &= t_1^m + t_2^m + \sum_{r=1}^{n-1}\sum_{d=0}^m \delta(d,n,r)\, t_1^{m-d}t_2^d\\
		&= 
		\sum_{d=0}^{m} \left(\sum_{\substack{\alpha \subset \{ 1, \ldots, n \}\\ ||\alpha|| = d} 
		} \psi_\alpha \psi_{\alpha^c}\right)\,t_1^{m-d}t_2^d,
	\end{align*}
	where in the last equation $\alpha$ runs over all strictly increasing subsequences of $\{1,\ldots,n\}$, including the case $\alpha = \emptyset$, and $||\alpha||$ denotes the sum of the entries of $\alpha$.
	Notice that $\psi_{\{1,\ldots,n\}} = 1$ (see \cite[Proposition A.15]{Laksov_Lascoux_Thorup}) and that by an abuse of notation we are setting $\psi_{\emptyset}=1$.
	Finally, by setting 
	$$
	\beta(n,d) \,=\, \sum_{\substack{\alpha \subset \{ 1, \ldots, n \}\\ ||\alpha|| = d} 
	} \psi_\alpha \psi_{\alpha^c},
	$$
	the result of this part follows from \autoref{rem_multdeg_sigma}.

	(ii) 
	Notice that \autoref{thmA} (ii) yields the equation 
	$$
	\beta(n,d) \,=\, \deg^{d,m-1-d}(\Gamma) + \deg^{d-1,m-d}(\Gamma).
	$$
	Since the ideal $\fJ$ of defining equations of $\Rees(I)$ is symmetric under swapping the variables $X_{i,j}$ and $Y_{i,j}$, it follows that $\deg^{d,m-1-d}(\Gamma) = \deg^{m-1-d, d}(\Gamma)$ for all $0 \le d \le m-1$.
	Accordingly, we have the equality 
	$$
	\beta(n,d) \,=\, \deg^{m-1-d,d}(\Gamma) + \deg^{m-d,d-1}(\Gamma).
	$$
	Therefore, the equation $\deg^{m-1-d,d}(\Gamma) \,=\, \sum_{j=0}^{d} {(-1)}^j \beta(n,d-j)$ is obtained iteratively.
\end{proof}

\section{Polynomiality of ML-degree}
\label{sect_poly}

During this short section, we show \autoref{head_cor}.
Our proof is an easy consequence of \autoref{thmB} and the following result.

\begin{theorem}[{Manivel-Micha\l{}ek-Monin-Seynnaeve-Vodi\v{c}ka; \cite[Theorem 4.3]{Michalek2020CQ}}]
	\label{thm_poly_phi_c}
	Let $\alpha = \{\alpha_1,\ldots,\alpha_r\}$ be a strictly increasing subsequence of  $\{1,\ldots,n\}$.
	For $n \ge 0$ the function 
	$$
	P_\alpha(n) \,:=\,
	\begin{cases}
		\psi_{\{1,\ldots,n\} \setminus \alpha} \quad \text{ if } \alpha \subset \{1,\ldots,n\} \\
		0 \qquad\qquad\;\; \text{ otherwise}
	\end{cases}
	$$
	is a polynomial in $n$ of degree $||\alpha|| = \alpha_1+\cdots+\alpha_r$.
\end{theorem}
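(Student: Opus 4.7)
The plan is to attack the Pfaffian $\psi_{\{1,\ldots,n\}\setminus\alpha}$ by decomposing each matrix entry into an exponential and a polynomial part, then showing that the exponential contributions cancel in the Pfaffian expansion, leaving a polynomial in $n$ of the predicted degree.

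As a first step, I would establish the decomposition
\begin{equation*}
\psi_{i,j} \,=\, 2^{i+j-2}\,-\,Q_i(j),
\qquad
Q_i(j) \,:=\, 2\sum_{k=0}^{i-2}\binom{i+j-2}{k}+\binom{i+j-2}{i-1},
\end{equation*}
obtained by applying the symmetry $\binom{N}{k}=\binom{N}{N-k}$ together with $\sum_{k=0}^{N}\binom{N}{k}=2^N$ to the defining sum of $\psi_{i,j}$; for fixed $i$, the remainder $Q_i(j)$ is a polynomial in $j$ of degree exactly $i-1$. Letting $\beta := \{1,\ldots,n\}\setminus\alpha$ (adjoined with $0$ when $|\beta|$ is odd, using $\psi_{0,k}=2^{k-1}$) and $I$ denote the resulting even-sized index set, I would write the antisymmetric matrix $A$ whose Pfaffian is $\psi_\beta$ as $A = E - P$, where $E_{kl} = 2^{\beta_k-1}\cdot 2^{\beta_l-1}$ for $k<l$ (extended antisymmetrically) and $P_{kl} = Q_{\beta_k}(\beta_l)$.

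Second, I would apply the sum-of-Pfaffians expansion
\begin{equation*}
\text{Pf}(E - P) \,=\, \sum_{\substack{T\subseteq I\\|T|\text{ even}}} \varepsilon(T)\,\text{Pf}(E_T)\,\text{Pf}(-P_{I\setminus T}),
\end{equation*}
combined with the direct computation $\text{Pf}(E_T) = \prod_{k \in T} 2^{\beta_k - 1}$ -- valid because $E_T$ is the sign-matrix Hadamard product of the rank-one symmetric matrix $uu^T$ with $u_k = 2^{\beta_k-1}$, whose Pfaffian factorizes as a product. This expresses $\psi_\beta$ as a weighted sum of sub-Pfaffians of $P$, each carrying an exponential prefactor.

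The central step, and the main obstacle, is to show that these exponential prefactors cancel in the total sum, reducing $\psi_\beta$ to a polynomial in the entries of $\beta$. I would attempt the cancellation via an involution on pairs $(T, M)$, with $M$ a perfect matching of $I \setminus T$, that swaps a pair of indices between ``$E$-type'' and ``$P$-type'' using the identity $\psi_i \psi_j - \psi_{i,j} = Q_i(j)$ from Step 1. If this combinatorial cancellation proves intractable, I would fall back on the geometric interpretation of $\psi_\beta$ as a Schubert calculus degree on the variety of complete quadrics used in \cite{Michalek2020CQ}, and derive polynomiality from the polynomial dependence of such degrees on the ambient dimension. For the degree analysis, after substituting $\beta = \{1,\ldots,n\}\setminus\alpha$ into the resulting polynomial, the large elements of $\beta$ grow linearly with $n$; in each surviving matching term, a pair $(\beta_k, \beta_l)$ with $\beta_k < \beta_l$ contributes a factor $Q_{\beta_k}(\beta_l)$ of $\beta_l$-degree $\beta_k - 1$. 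Careful accounting over all matching terms, using the complement structure $\beta = \{1,\ldots,n\}\setminus\alpha$, should yield top $n$-degree $\alpha_1 + \cdots + \alpha_r = ||\alpha||$; agreement across both parities of $|\beta|$ follows automatically from agreement at infinitely many values.
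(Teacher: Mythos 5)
The paper does not prove this theorem; it is imported verbatim as Theorem~4.3 of \cite{Michalek2020CQ} (Manivel--Micha\l{}ek--Monin--Seynnaeve--Vodi\v{c}ka) and used as a black box in the proof of Corollary~C. So there is no in-paper argument to compare your proposal against, and the burden falls entirely on whether your outline is a proof.

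It is not yet one, and you say so yourself. Steps~1 and~2 are fine: the identity $\psi_{i,j}=\psi_i\psi_j-Q_i(j)$ with $\deg_j Q_i=i-1$ follows correctly from binomial symmetry, and $\mathrm{Pf}(E_T)=\prod_{k\in T}2^{\beta_k-1}$ is right because $E_T$ is $\prod u_k$ times the all-ones-above-diagonal antisymmetric matrix, whose Pfaffian equals $1$ by an easy row expansion. But the ``central step'' --- cancellation of the exponential prefactors --- is exactly where the proposal stops. Notice that the $T=I$ term of the Pfaffian sum formula contributes the single exponential $\prod_{k\in\beta}2^{\beta_k-1}$ with sign $+1$, so the needed cancellation is not internal to that term; it must come from the interplay of \emph{different} $T$'s of varying sizes, and the required identities depend delicately on the arithmetic relation $\beta=\{1,\dots,n\}\setminus\alpha$. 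A sign-reversing involution on pairs $(T,M)$ would have to respect these exponential weights uniformly in $n$, and you offer no construction, no invariant it would preserve, and no verification in even the smallest nontrivial case. The degree bound $\|\alpha\|$ is likewise asserted by a heuristic count (``careful accounting \dots should yield'') rather than derived. Finally, the fallback --- ``derive polynomiality from the polynomial dependence of Schubert-calculus degrees on the ambient dimension'' --- is a gesture toward the geometry of complete quadrics used in \cite{Michalek2020CQ}, not an argument; polynomial dependence of intersection numbers on the rank $n$ is itself the nontrivial content of that paper's Theorem~4.3 and cannot be invoked as if it were standard.

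In short: the decomposition $A=E-P$ is a reasonable opening move, but the cancellation lemma it hinges on is stated as a hope rather than proved, and there is no evidence presented that it is even true in the clean combinatorial form you propose. As it stands, this is a research plan with an open core step, not a proof of the theorem.
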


Finally, we provide our proof for the polynomiality of $\phi(n,d)$.

\begin{proof}[Proof of \autoref{head_cor}]
	By using \autoref{thmB} (ii) and (\ref{eq_phi_as_multdeg}) we obtain the equation
	$$
	\phi(n,d) \,=\, \deg^{m-d,d-1}(\Gamma) \,=\, \sum_{j=0}^{d-1} {(-1)}^j \beta(n,d-1-j).
	$$
	Therefore, it suffices to show that 
	$$
	\beta(n,d) \,=\, \sum_{\substack{\alpha \subset \{ 1, \ldots, n \}\\ ||\alpha|| = d} 
	} \psi_\alpha \psi_{\alpha^c}
	$$
	in $n$ of degree $d$.
	Since $\psi_\alpha$ does not depend on $n$, the result follows directly from \autoref{thm_poly_phi_c}.
\end{proof}

\section*{Acknowledgments}

I am  very grateful to Bernd Sturmfels for suggesting me to work on this problem.
I thank Mateusz Micha\l{}ek and Tim Seynnaeve for helpful conversations.
I also thank the organizers of the Linear Spaces of Symmetric Matrices working group at MPI MiS Leipzig.


\end{document}